\documentclass[11pt]{amsart}
\usepackage{amssymb}
\usepackage{amscd}
\usepackage{verbatim}
\usepackage[usenames,dvipsnames]{pstricks}
\usepackage{epsfig}
\usepackage{pst-grad} 
 \usepackage{pst-plot} 
\usepackage{verbatim}
\usepackage{curves}
\usepackage{geometry}
\usepackage{tikz}
\usetikzlibrary{patterns}

\usepackage{enumerate}


\usepackage{geometry}
\newtheorem{thm}{Theorem}[section]
\newtheorem{lemma}[thm]{Lemma}

\newtheorem{cor}[thm]{Corollary}

\theoremstyle{definition}

\theoremstyle{remark}

\setlength{\oddsidemargin}{.05in}
\setlength{\evensidemargin}{.05in}
\setlength{\textwidth}{6.45in}
\setlength{\topmargin}{.25in}
\setlength{\textheight}{608pt}

\numberwithin{equation}{section}

\newcommand{\veps}{\varepsilon}

\def \intx {\stackrel{\circ}{X}}

\def \mrn {{\mathbb R}^n}

\def \mr {{\mathbb R}}
\def \tvphi {\widetilde{\vphi}}

\def \ms {{\mathbb S}}

\def \mcs {{\mathcal S}}

\def \mch {{\mathcal H}}

\def \mcr {{\mathcal R}}

\def \eps {\varepsilon}   
\def \la {\lambda}

\def \lan {\langle}   
\def \ran {\rangle}   
\def \del {\delta}

\def \ha{ {\frac{1}{2}}}

\def \oq {\frac{1}{4}}

\def \p {\partial}

\def \rao#1 {\frac{\p}{\p #1} #1}

\setlength{\oddsidemargin}{.05in}
\setlength{\evensidemargin}{.05in}
\setlength{\textwidth}{6.45in}
\setlength{\topmargin}{.25in}
\setlength{\textheight}{608pt}
 
\numberwithin{equation}{section}

\usepackage{latexsym,eucal,amsfonts,amssymb,amsmath,graphicx}


\newcommand{\vphi}{\varphi}

\newcommand{\supp}{\textrm{Supp}}

\newcommand{\beq}{\begin{equation}}
  \newcommand{\eeq}{\end{equation}}

\def \intx {\stackrel{\circ}{X}}

\def \mrn {{\mathbb R}^n}

\def \mr {{\mathbb R}}

\def \ms {{\mathbb S}}

\def \mcs {{\mathcal S}}

\def \mch {{\mathcal H}}

\def \mcr {{\mathcal R}}

\def \intx {\overset{\circ}{X}}

\def \eps {\varepsilon}
\def \la {\lambda}

\def \novt {\frac{n}{2}}

\def \lan {\langle}
\def \ran {\rangle}
\def \del {\delta}

\input epsf
\usepackage{graphicx}
\begin{document}
\title[Local Support Theorem]
{A Local Support Theorem for the Radiation Fields on
  asymptotically Euclidean Manifolds}
\author{Ant\^onio S\'a Barreto}
\thanks{The author was partly supported by the NSF under grant DMS 0901334}
\address{Department of Mathematics, Purdue University \newline
\indent 150 North University Street, West Lafayette IN  47907, USA}
\email{sabarre@math.purdue.edu}

\begin{abstract}
We prove a local support theorem for the radiation fields on asymptotically Euclidean manifold which partly generalizes the local support theorem for the Radon transform.
\end{abstract}
\maketitle

\section{Introduction}\label{introduction}

The class of asymptotically Euclidean manifolds introduced by Melrose \cite{megs,mesc}  consists of   $C^\infty$  compact manifolds $X$ with boundary $\p X,$  equipped with a Riemannian metric that is $C^\infty$ in the interior of $X$ and singular at $\p X,$ where it has an expansion
\begin{gather} 
g= \frac{dx^2}{x^4}+ \frac{\mch}{x^2}, \label{metric0}
\end{gather}
where $x$ is a defining function of $\p X$ (that is $x\in C^\infty(X),$ $x\geq 0,$  $x^{-1}(0)=\p X,$  and $dx\not=0$ at $\p X$), and $\mch$ is a  $C^\infty$ symmetric 2-tensor such that $h_0=\mch|_{\p X}$ defines a metric on $\p X.$    The motivation for this definition comes from the fact that in polar coordinates $(r,\theta)$ the Euclidean metric  has the form $g_E=dr^2+r^2 d\omega^2,$ where $d\omega^2$ is the induced metric on $\ms^{n-1}.$ If one then uses the compactification $x=\frac{1}{r},$ for $r>C,$ the metric $g$ takes the form $$g_E=\frac{dx^4}{x^4}+\frac{d\omega^2}{x^2}, \text{ near } \{x=0\}.$$

 It was pointed out in \cite{megs} that any two boundary defining functions $x$ and $\tilde{x}$ for which \eqref{metric0} holds, must satisfy $x-\tilde{x}=O(x^2),$ and hence
 $\mch|_{\p X}$ is uniquely determined by the metric $g.$  It was shown in \cite{js3} that fixed $h_0=\mch|_{\p X},$ there exists a unique defining function $x$ near $\p X$ such that
\begin{gather} 
g= \frac{dx^2}{x^4}+ \frac{h(x)}{x^2},  \text{ in } (0,\eps)\times \p X,  \label{metric}
\end{gather}
where $h(x)$ is a $C^\infty$one-parameter family of metrics on $\p X$ and $h(0)=h_0.$

We will consider solutions to the Cauchy problem for the wave equation, 
\begin{gather}
\begin{gathered}
\left(D_t^2-\Delta_g\right)u(t,z)=0 \text{ on } (0,\infty) \times X \\
u(0,z)=f_1(z), \;\ \p_t u(0,z)=f_2(z),
\end{gathered}\label{waveeq}
\end{gather}
where $\Delta_g$ is the (positive) Laplace operator corresponding to the metric $g.$  The forward radiation field was defined by Friedlander \cite{fried0,fried1} as
\begin{gather}
\mcr_+(f_1,f_2)(s,y)= \lim_{x\rightarrow 0} x^{-\frac{n}{2}} D_t u(s+\frac{1}{x},x,y), \label{fradf}
\end{gather}
where  $n$ is  the dimension of $X.$ In the case of odd-dimensional Euclidean space, this is also known as the Lax-Phillips transform, and is given by
\begin{gather*}
\mcr_+(f_1,f_2)(s,\omega) = (2(2\pi))^{\frac{n-1}{2}} \left( D_s^{\frac{n+1}{2}} R f_1(s,-\omega)- D_s^{\frac{n-1}{2}} R f_2(s,-\omega)\right), 
\end{gather*}
 where  $R$  is the Radon transform $Rf(s,\omega)=\int_{\lan x,\omega\ran=s} f(x) \; d\mu(x),$  and  $\mu(x)$  is the Lebesgue measure on the hyperplane  $\lan x,\omega \ran =s.$  The well known theorem of Helgason \cite{helgrt} states that if $f \in \mcs(\mr^n)$ (the class of Schwartz functions), and $Rf(s,y)=0$  for $s<\leq\rho,$  then $f(z)=0$ for $|z|\geq \rho.$  One should notice that $Rf(-s,-\omega)=Rf(s,\omega),$ if $Rf(s,\omega)=0$ for $s\leq -\rho,$ then $Rf(s,\omega)=0$ for $s\geq \rho$.   Wiegerinck \cite{wieg} proved local versions of this result. More precisely, he proved that if $f\in C_0^\infty(\mr^n),$ then $f(z)=0$ on the set 
 $$\{z\in \mr^n:  \lan z, \omega\ran=s, \text{ and } (s,\omega)\not\in \supp (Rf).\}.$$

 Wiegerinck's  proof  relies very strongly on analyticity  properties  of the Fourier transform of  functions in $C_0^\infty(\mr^n),$ and the fact that the Fourier transform in the $s$ variables of $Rf(s,\omega)$ satisfies $\widehat{Rf}(\la,\omega)=\widehat{f}(\la\omega),$ where the right hand side essentially is the Fourier transform of $f$ in polar coordinates.  Such a result  is not likely to hold in more general situations.  Here we will prove the following
\begin{thm}\label{main} Let  $(X,g)$ be an asymptotically Euclidean manifold,  let $\Omega \subset \p X$ be an open subset, and let $f\in C_0^\infty(\intx).$ 
  Let $\veps>0$ be such that \eqref{metric} holds on $(0,\veps)\times \p X$ and let $\bar{\veps}=\min\{\veps,-\frac{1}{s_0}\}.$ 
Then  $\mcr_+(0,f)(s,y)=0$ for $s\leq s_0<0$ and $y \in \Omega,$  if and only if for every $(x,y),$ $x\in(0,\bar{\eps}),$ and $y\in \Omega,$
\begin{gather}
 d_g((x,y), \supp f) \geq s_0+ \frac{1}{x}. \label{distsup}
\end{gather}
\end{thm}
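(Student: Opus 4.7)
Let $u$ solve \eqref{waveeq} with $f_1=0$ and $f_2=f.$ Since $f$ has compact support in $\intx$ and $g$ is smooth there, finite propagation speed gives $u(t,p)=0$ whenever $d_g(p,\supp f)>t.$ For $p=(x,y)$ with $x\in(0,\bar{\veps}),$ $y\in\Omega,$ and $t=s+1/x$ with $s<s_0,$ the assumption \eqref{distsup} yields $t<d_g(p,\supp f),$ so $u$ vanishes in a neighborhood of $(t,x,y);$ hence $x^{-n/2}D_tu(s+1/x,x,y)=0$ for all sufficiently small $x.$ Letting $x\to 0$ and invoking continuity in $s$ extends $\mcr_+(0,f)(s,y)=0$ to all $s\le s_0.$

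\textbf{Hard direction.} Assume $\mcr_+(0,f)(s,y)=0$ for $s\le s_0,$ $y\in\Omega,$ and fix $p_0=(x_0,y_0)$ with $x_0\in(0,\bar{\veps}),$ $y_0\in\Omega.$ The target is $f\equiv 0$ on the geodesic ball $B_g(p_0,s_0+1/x_0).$ Since the initial data is $(0,f),$ it suffices to show that $u$ vanishes on an open two-sided neighborhood in $t$ of $\{0\}\times B_g(p_0,s_0+1/x_0),$ because then $f(p)=\p_t u(0,p)=0$ for every $p$ in the ball. I would carry this out in two steps.

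\emph{Step (i): from the radiation field to a collar of $\p X.$} Friedlander's analysis of the wave equation near infinity in an asymptotically Euclidean geometry furnishes a full asymptotic expansion of $u$ along the characteristic family $t-1/x=s,$ whose leading coefficient is $\mcr_+(0,f)(s,y)$ and whose higher coefficients are determined inductively from the leading one by transport equations for $D_t^2-\Delta_g.$ Hence the hypothesis forces $u$ to vanish to infinite order at $\p X$ over $(-\infty,s_0]\times\Omega,$ and a boundary unique continuation argument, using that $\p X$ is a radial surface for the scattering wave operator, upgrades this to honest vanishing of $u$ on a collar $V=\{(t,x,y):x\in(0,\delta),\ y\in\Omega',\ t\le s_0+1/x\}.$ \emph{Step (ii): from the collar to the interior.} Every minimizing geodesic of length less than $s_0+1/x_0$ starting at $p_0$ reaches $V$ within time $s_0+1/x_0,$ so Tataru's unique continuation theorem for the smooth hyperbolic operator $D_t^2-\Delta_g,$ applied with a pseudoconvex foliation adapted to the weight $t+d_g(p_0,\cdot),$ extends the vanishing across the corresponding domain of dependence and yields $u\equiv 0$ on an open neighborhood of $\{0\}\times B_g(p_0,s_0+1/x_0).$

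\textbf{Main obstacle.} The crux is Step (i): the radiation field records only the leading asymptotic coefficient, so its vanishing is \emph{a priori} weaker than vanishing of $u$ itself. One must first verify that every coefficient in Friedlander's expansion inherits the same support constraint (which follows inductively once the leading one vanishes), and then invoke a unique continuation principle at the radial set $\p X,$ where standard H\"ormander-type Carleman estimates do not apply directly and must be adapted using the scattering-calculus structure of the wave operator near the boundary. Once Step (i) is in place, Step (ii) is a routine application of the Tataru-H\"ormander machinery within the smooth interior, and the two directions together deliver the stated equivalence.
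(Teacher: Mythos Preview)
Your outline matches the paper's architecture --- vanishing near $\partial X$ first, Tataru's theorem second --- and the easy direction is fine. The genuine gap is in Step~(i), and your ``Main obstacle'' paragraph does not close it.

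The paper does \emph{not} proceed by a boundary unique-continuation principle at a radial set, nor does it invoke scattering-calculus structure. The key device you miss is Friedlander's \emph{smooth extension of $v(x,s,y)=x^{-n/2}u(s+1/x,x,y)$ across $x=0$}: after extending $h(x)$ to $x\in[-\eps,\eps]$, $v$ solves the transformed wave operator $Pv=0$ on a two-sided neighborhood of $\{x=0\}$. The transport/power-series argument you allude to then gives $\partial_x^k v(0,s,y)=0$ for all $k$, hence $v\equiv 0$ on $\{x<0,\ s<s_0,\ y\in\Omega\}$. At this point there is no boundary: one faces an \emph{interior} unique-continuation problem for $P$ across the wedge formed by $\{x=0,\ s<s_0\}$ and $\{s=-1/x_0\}$. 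The paper attacks this wedge with an explicit H\"ormander-type Carleman estimate, building a weight $\varphi=-2x-2\delta(s-a)-x(s-a)-\delta^{1/2}|y|^2$ (here $a=-1/x_0$) and verifying strong pseudoconvexity by direct computation of $H_p\varphi$ and $H_p^2\varphi$ on $\{p=0\}$. This estimate yields only a \emph{small} advance in $s$, of size roughly $(s_0-a)/3N$, and the full collar vanishing $\{-1/x<s<s_0,\ x<\bar\eps,\ y\in\Omega\}$ is obtained only after an \emph{iteration}: at each stage one feeds the Carleman gain back through Tataru's theorem (using oddness $u(-t,z)=-u(t,z)$ to double the time interval) to restore vanishing on the whole collar up to the new $s$-level, then repeats. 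Your proposal compresses all of this into a single unspecified ``boundary unique continuation'' step, which is precisely the content that needs to be supplied; without the extension trick, the Carleman weight, and the iteration, Step~(i) is an assertion rather than an argument.

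A minor remark on Step~(ii): once the collar vanishing is established, one does not need to trace geodesics from $p_0$ into $V$. It is enough to note that $u(t,x,y)=0$ for $|t|<s_0+1/x$ at each collar point $(x,y)$, apply Tataru from that point, and set $t=0$; this gives $f(z)=0$ whenever $d_g(z,(x,y))<s_0+1/x$ for some $(x,y)$ with $x\in(0,\bar\eps)$, $y\in\Omega$, which is exactly \eqref{distsup}.
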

 In the case where $\Omega=\p X,$ this result was proved in \cite{sberf}. In the case of radial solutions of semilinear wave equations $\Box u=f(u)$ in $\mr\times \mr^3,$ with critical non-linearities, and $\Omega=\ms^{n-1}$  a similar result was proved in \cite{basa}. In the case of asymptotically hyperbolic manifolds results of this nature  have been proved in \cite{guilsa,hosa,sbhrf}.
 
 In Euclidean space, the polar distance $r=\frac{1}{x},$ and hence \eqref{distsup} implies that if $z\in \supp(f),$ then for every $p,$ such that  $p=r\omega,$ $\omega \in \Omega,$ and   $|p|>|s_0|,$
 \begin{gather*}
 |z-p|\geq |p|-|s_0|.
 \end{gather*}
 In particular this implies that if 
 \begin{gather*}
 |z|^2-2r\lan z, \omega\ran \geq |s_0|^2-2r |s_0|, \;\  r>|s_0|,  \;\ \omega \in \Omega.
 \end{gather*}
If we let $r\rightarrow \infty,$ it follows that if $z\in \supp(f)$ then $\lan z, \omega \ran \leq |s_0|.$ See Fig. \ref{fig4} 

\begin{figure}
\scalebox{.7} 
{
\begin{pspicture}(0,-1.23)(13.16,1.89)
\psline[linewidth=0.04cm](7.24,1.29)(11.72,-1.21)
\psline[linewidth=0.04cm](5.4924355,1.2975421)(1.3675644,-1.1575421)
\usefont{T1}{ptm}{m}{n}
\rput(6.31,1.695){$\Omega$}
\usefont{T1}{ptm}{m}{n}
\rput(10.94,0.675){$\lan z, \omega\ran\geq |s_0|$}
\usefont{T1}{ptm}{m}{n}
\rput(2.18,0.715){$\lan z, \omega\ran\geq |s_0|$}
\psarc[linewidth=0.04](6.35,-0.16){1.69}{57.36249}{121.551384}
\usefont{T1}{ptm}{m}{n}
\rput(6.61,0.755){$|s_0|$}
\usefont{T1}{ptm}{m}{n}
\rput(8.53,1.195){$f=0$}
\usefont{T1}{ptm}{m}{n}
\rput(4.37,1.235){$f$=0}
\usefont{T1}{ptm}{m}{n}
\rput(11.26,0.275){$\omega\in \Omega$}
\usefont{T1}{ptm}{m}{n}
\rput(1.92,0.315){$\omega\in \Omega$}
\rput{-180.05002}(12.640708,1.0968579){\psarc[linewidth=0.04,linestyle=dashed,dash=0.16cm 0.16cm](6.3201146,0.5511877){1.0810385}{315.1605}{236.88087}}
\psline[linewidth=0.04,linestyle=dashed,dash=0.16cm 0.16cm](6.98,1.37)(6.34,0.37)(5.58,1.33)(5.58,1.31)
\end{pspicture} 
}
\caption{If $f\in C_0^\infty(\mr^n)$ and  $\mcr(0,f)(s,\omega)=0$ for $s\leq s_0<0$ and $\omega \in \Omega\subset \ms^{n-1},$ then $f(z)=0$ if $\lan z, \omega \ran \geq |s_0|$ for all $\omega\in \Omega.$}
\label{fig4}
\end{figure}
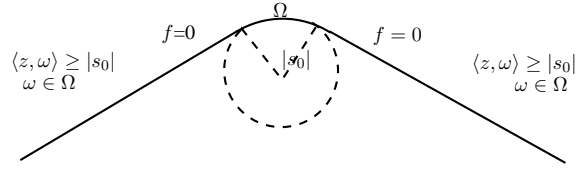
 This result can be rephrased in terms of the sojourn times for geodesics in $\mr^n.$    Let $\omega \in \ms^{n-1}$ and $\gamma_{z,\omega}(t)=z+t \omega$ be  a geodesic starting at a point $z\in \mr^n$ in the direction of the unit vector $\omega.$  The sojourn time along $\gamma_{z,\omega}$ is defined to be 
$$S(z,\omega)=\lim_{t\rightarrow \infty} (t-|\gamma_{z,\omega}(t)|),$$ see for example \cite{sawu}.  But
\begin{gather*}
t-|\gamma(t)|=t-\left(|z|^2+t^2+2t \lan z, \omega\ran \right)^\ha= t-t\left( 1+\frac{2}{t} \lan z, \omega \ran + \frac{|z|^2}{t^2}\right)^\ha= -\lan z,\omega\ran + O(t^{-1}).
\end{gather*}
So Theorem \ref{main} says that  if $z\in \supp(f)$ and $\omega\in \Omega,$ then $S(z,\omega)\geq s_0,$ (i.e $\lan z, \omega\ran \leq |s_0|.$).

The connection between sojourn times and scattering theory is well known, see for example \cite{guil}. The sojourn times 
on asymptotically Euclidean manifolds was studied in \cite{sawu}.  If $(X,g)$ is an asymptotically Euclidean manifold,  $z\in \intx$   and $\gamma(t)$ is a geodesic  parametrized by the arc-length such that $\gamma(0)=z$ and $\lim_{t\rightarrow \infty}\gamma(t)=y\in \p X,$ the sojourn time along $\gamma$ is defined by
\begin{gather*}
S(z,\gamma)=\lim_{t\rightarrow \infty} (t-\frac{1}{x(\gamma(t))}),
\end{gather*}
where $x$ is a boundary defining function as in \eqref{metric}.  We obtain the following result from Theorem \ref{main}:
\begin{cor}\label{conse} Let $f\in C_0^\infty(\intx)$ and let $\Omega\subset \p X$ be an open subset. Suppose that $\mcr(0,f)(s,y)=0$ for every $s\leq  s_0<0$ and $y \in \Omega.$  If $z\in \intx$ is such that there exists $y \in \Omega$ and a geodesic $\gamma$ parametrized by the arc-length such that $\gamma(0)=z,$  $\lim_{t\rightarrow \infty}\gamma(t)=y,$ and $S(z,\gamma)< s_0,$ then $f(z)=0.$
\end{cor}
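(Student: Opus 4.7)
The plan is to derive the corollary directly from Theorem \ref{main} by a contradiction argument. I will assume $z\in \supp(f)$ and show this forces $S(z,\gamma)\geq s_0$, contradicting the hypothesis, so that in fact $z\notin\supp(f)$ and hence $f(z)=0$ by continuity.

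The first step is to observe that for $t$ sufficiently large, the point $\gamma(t)$ lies in the collar region where Theorem \ref{main} has been proved. Since $\gamma(t)\to y\in\Omega$ with $\Omega$ open in $\p X$, and since $x(\gamma(t))\to 0$, for all $t$ large enough we have that $\gamma(t)=(x_t,y_t)$ with $x_t\in(0,\bar\veps)$ and $y_t\in\Omega$. Applying Theorem \ref{main} at such $\gamma(t)$ yields
\begin{equation*}
d_g(\gamma(t),\supp f)\;\geq\;s_0+\frac{1}{x(\gamma(t))}.
\end{equation*}

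The second step is to exploit that $\gamma$ is arc-length parametrized: the restriction $\gamma|_{[0,t]}$ is a curve from $z$ to $\gamma(t)$ of length exactly $t$, so $d_g(z,\gamma(t))\leq t$. If $z\in\supp(f)$, then $d_g(\gamma(t),\supp f)\leq d_g(\gamma(t),z)\leq t$, and combined with the previous inequality this gives
\begin{equation*}
t-\frac{1}{x(\gamma(t))}\;\geq\;s_0
\end{equation*}
for all sufficiently large $t$. Letting $t\to\infty$ and using the definition of the sojourn time immediately produces $S(z,\gamma)\geq s_0$, contradicting $S(z,\gamma)<s_0$.

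I do not expect any substantive obstacle here; the entire content is packaged inside Theorem \ref{main} and the triangle inequality. The only point requiring care is checking that the hypotheses of Theorem \ref{main} indeed apply at $\gamma(t)$ for large $t$, which follows from $\Omega$ being open in $\p X$ together with the asymptotic behavior $\gamma(t)\to y$ and $x(\gamma(t))\to 0$. Everything else in the argument is a single application of the triangle inequality followed by a limit.
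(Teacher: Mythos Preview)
Your proof is correct and follows essentially the same route as the paper's: both use that $\gamma(t)$ eventually lies in $(0,\bar\veps)\times\Omega$, invoke Theorem~\ref{main} at such points, and compare $d_g(z,\gamma(t))\leq t$ with the bound $s_0+\frac{1}{x(\gamma(t))}$. The only cosmetic difference is that the paper argues directly (from $S(z,\gamma)<s_0$ it finds a single large $t$ with $d_g(z,\gamma(t))< s_0+\tfrac{1}{x(\gamma(t))}$, so $z\notin\supp f$), whereas you phrase the same inequality as a contradiction and pass to the limit; the content is identical.
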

\begin{proof} If $z$ and $\gamma(t)$ are as in the hypothesis, then since $t$ is the arc-length parameter $ d(z,\gamma(t))\leq t.$  If $S(z,\gamma)<s_0,$ then there exists $T>0$ such that $ t-\frac{1}{x(\gamma(t))}<s_0$ for $t>T.$  If $T$ is large enough $\gamma(t)\in (0,\eps) \times \Omega,$ and for $t>T,$
\begin{gather*}
d(z,\gamma(t))\leq t < s_0+\frac{1}{x(\gamma(t))},
\end{gather*}
thus $z\not\in \supp(f),$ and hence $f(z)=0.$
\end{proof}

\section{The proof of Theorem \ref{main}}

 Suppose that $f\in C_0^\infty(\intx)$ and \eqref{distsup} holds for $x\in(0,\eps)$ and $y \in \Omega.$  Let $u$ be the solution of \eqref{waveeq} with initial data $(0,f),$ and let 
$v(x,s,y)=x^{-\frac{n}{2}} u(s+\frac{1}{x},x,y).$    By finite speed of propagation,
\begin{gather*}
u(t,(x,y))=0 \text{ if } t\leq d_g((x,y),\supp(f)).
\end{gather*}
This implies that
\begin{gather*}
v(x,s,y)=0 \text{ if } s\leq d_g((x,y),\supp(f))-\frac{1}{x}.
\end{gather*}
If $d_g((x,y),\supp(f))-\frac{1}{x}\geq s_0,$ then $v(x,s,y)=0$ if $x\in(0,\eps),$ $y\in \Omega$ and $s\leq s_0.$ In particular, $\mcr(0,f)(s,y)=0$ if $s\leq s_0$ and $y \in \Omega.$
The converse is much harder to prove. 

Since $f\in C_0^\infty(\intx),$ there exists $x_0\in (0,\eps)$ such that 
$\supp(f)\ \subset \{ x\geq x_0\}.$   If  $-\frac{1}{x_0}<s_0,$ the result is obvious. Indeed,  if $x<x_0,$ then $d((x,y), \supp(f))> d((x,y), (x_0,y))=\frac{1}{x}-\frac{1}{x_0}>  \frac{1}{x}+s_0,$ then, by the definition of support,  $f(z)=0$ if there exists $(x,y)$ such that $d(z, (x,y))\leq s_0+\frac{1}{x}.$ 
So we will assume from now on that $\supp (f) \subset \{x\geq x_0\},$ and $-\frac{1}{x_0}<s_0,$ see Fig. \ref{fig0}.   
\begin{figure}
\scalebox{.6} 
{
\begin{pspicture}(0,-4.87)(10.378055,4.89)
\definecolor{color264g}{rgb}{0.8,0.8,0.8}
\psline[linewidth=0.04cm](1.6780548,4.03)(1.6980548,-4.85)
\psline[linewidth=0.04cm](1.6780548,1.09)(9.678055,1.13)
\psbezier[linewidth=0.04](2.2380548,-4.17)(2.2180548,-3.05)(3.9588823,-1.4511017)(4.8380547,-0.87)(5.7172275,-0.2888983)(8.298055,0.69)(8.958055,0.53)
\usefont{T1}{ptm}{m}{n}
\rput(10.068055,1.035){$x$}
\usefont{T1}{ptm}{m}{n}
\rput(2.1280549,4.695){$s$}
\usefont{T1}{ptm}{m}{n}
\rput(1.2680548,0.555){$s_0$}
\usefont{T1}{ptm}{m}{n}
\rput(3.3080547,1.335){$x_0$}
\psline[linewidth=0.04cm,linestyle=dashed,dash=0.16cm 0.16cm](1.6780548,0.31)(7.678055,0.33)
\psline[linewidth=0.04cm,linestyle=dashed,dash=0.16cm 0.16cm](3.0180547,1.09)(3.0380547,-2.51)
\psline[linewidth=0.04cm,linestyle=dashed,dash=0.16cm 0.16cm](1.6580548,-2.47)(3.0580547,-2.47)
\usefont{T1}{ptm}{m}{n}
\rput(3.2580547,-3.285){$v=0$}
\usefont{T1}{ptm}{m}{n}
\rput(4.728055,-3.685){by finite speed of propagation}
\usefont{T1}{ptm}{m}{n}
\rput{88.99379}(-0.98454493,-1.2780112){\rput(0.1580548,-1.145){$v=0$}}
\psline[linewidth=0.04,fillstyle=gradient,gradlines=2000,gradbegin=color264g,gradend=color264g,gradmidpoint=1.0](0.6380548,0.31)(0.65805477,-4.11)(1.7180548,-4.11)(1.6980548,0.33)(0.6380548,0.33)(0.65805477,0.31)
\psline[linewidth=0.04,fillstyle=gradient,gradlines=2000,gradbegin=color264g,gradend=color264g,gradmidpoint=1.0](1.7380548,-2.49)(3.0580547,-2.47)(2.6180549,-3.07)(2.318055,-3.67)(2.2380548,-4.11)(1.6980548,-4.11)
\end{pspicture} 
}
\caption{$v(x,s,y)=0$ for $-\frac{1}{x}<s\leq s_0$ by finite speed of propagation, and  for $y\in \Omega,$ $v=0$ for $x\leq 0$ and $s\leq s_0$ because $\mcr(0,f)=0$ for $s\leq s_0$.}
\label{fig0}
\end{figure}
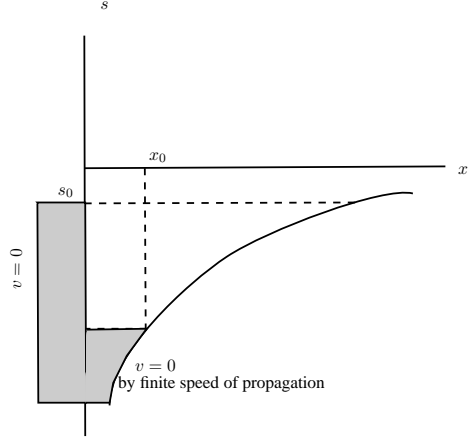

The one-parameter family of metrics $h(x),$ $x\in [0,\eps],$  has a (non-unique)  $C^\infty$ extension to  $[-\eps,\eps],$ and Friedlander proved that , fixed the etension of $h,$
$v(x,s,y)=x^{-\frac{n}{2}} u(s+\frac{1}{x},x,y)$ has a unique extension to  $C^\infty([-\eps,\eps] \times \mr \times \p X)$ which satisfies
\begin{gather}
\begin{gathered}
Pv=0 \text{ for } s>-\frac{1}{x} \\
v(x,-\frac{1}{x},y)=0, \;\ \p_s v(x, -\frac{1}{x},y)= x^{-\novt} f(x,y),
\end{gathered}\label{waveeq1}
\end{gather}
where $P$ is the wave operator written in coordinates $(x,s,y),$ with $s=t-\frac{1}{x},$ which is
\begin{gather*}
P=D_x(2D_s+x^2 D_x) + \Delta_{h} +iA(D_s+x^2D_x) +B, \\
A=\frac{1}{\sqrt{|h|}} \p_x \sqrt{|h|} , \;\  B= \frac{n-1}{2}(\frac{3-n}{2}+xA),
\end{gather*}
$|h|$ is the volume element of the metric $h$ and $\Delta_{h}$ is the (positive) Laplacian with respect to $h.$    By finite speed of propagation, $v=0$ if 
$s\leq -\frac{1}{x_0},$ and the formal power series argument carried out in section 4 of \cite{sberf}  shows that  $\p_x^k v(0,s,y)=0$ for $k=0,1,2,...,$ provided $s< s_0$ and  $y\in \Omega.$ 
 This implies that 
\begin{gather}
\begin{gathered}
v(x,s,y)=0 \text{ if  }  x<0, \;\ s<s_0 ,\;\ y \in \Omega, \\
v(x,s,y)=0 \text{ if }   s\leq -\frac{1}{x_0}, \;\ s>-\frac{1}{x}, \;\ 0<x<\eps,
\end{gathered}\label{vanv}
\end{gather}
see Fig. \ref{fig0}.    We begin by proving 
\begin{lemma}\label{lemma1}  Let $v(x,s,y)$ satisfy \eqref{waveeq1} and \eqref{vanv} for $x_0\in (0,\eps)$  and $-\frac{1}{x_0}<s_0<0.$  Let $y_0\in \Omega$ and suppose that $\{y:|y-y_0|<r\}\subset \Omega.$  Let $N$ be such that $s_0+\frac{1}{x_0}<\frac{N}{4}.$ There exists $\del>0,$  depending on $r$ and on derivatives up to order two of the tensor  $h(x),$  $x\in [-\eps,\eps],$  such that $v=0$ on the set
\begin{gather}
\left\{x<\frac{\del}{3N}(s_0+\frac{1}{x_0}) , \; |y-y_0|<\left(\frac{\del^\ha}{3N}(s_0+\frac{1}{x_0}) \right)^\ha, \;\ -\frac{1}{x}<s< -\frac{1}{x_0}+\frac{1}{3N}(s_0+\frac{1}{x_0})\right\}. \label{vanv1}
\end{gather} 
\end{lemma}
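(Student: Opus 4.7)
The plan is to deduce \eqref{vanv1} from the two known vanishing sets in \eqref{vanv},
\[
R_1 = \{x \le 0,\ s < s_0,\ y \in \Omega\}, \qquad R_2 = \{0 < x < \varepsilon,\ -1/x < s \le -1/x_0\},
\]
by a local unique continuation argument for $P$ at the corner point $p_0 = (0,\, -1/x_0,\, y_0)$, where the closures of $R_1$ and $R_2$ meet. The $x \le 0$ slice of \eqref{vanv1} is automatically inside $R_1$ — the upper bound on $s$ is $-1/x_0 + \frac{1}{3N}(s_0 + 1/x_0) < s_0$, and the ball in $y$ can be shrunk inside $\Omega$ by taking $\delta$ small relative to $r$ — so the real task is to push the vanishing into the wedge $\{x > 0,\ s > -1/x_0\}$ immediately above $p_0$.

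The principal symbol of $P$ at $p_0$ is $p(p_0;\xi,\sigma,\eta) = 2\xi\sigma + |\eta|^2_{h_0(y_0)}$, so a covector $(\phi_x,\phi_s,\phi_y)$ is non-characteristic there exactly when $2\phi_x\phi_s + |\phi_y|^2 \ne 0$. I would try the affine-plus-quadratic weight
\[
\phi(x,s,y) \;=\; (s + 1/x_0) \;+\; \alpha\,x \;+\; \beta\,|y-y_0|^2,
\]
with $\alpha,\beta > 0$ to be fixed, and check three properties. First, $\phi(p_0) = 0$ and $d\phi(p_0) = (\alpha, 1, 0)$ is non-characteristic with symbol $2\alpha > 0$. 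Second, on a small neighborhood $U$ of $p_0$, $U \cap \{\phi \le 0\} \subset R_1 \cup R_2$: for $x \ge 0$ the inequality forces $s + 1/x_0 \le -\alpha x - \beta|y-y_0|^2 \le 0$, hence $s \le -1/x_0$ and the point lies in $R_2$; for $x < 0$ it forces $s \le -1/x_0 + \alpha|x|$, which lies in $\{s < s_0\}$ whenever $\alpha|x| < s_0 + 1/x_0$ and $|y - y_0| < r$, placing the point in $R_1$. Third, by taking $\alpha$ (resp.\ $\beta$) large enough in terms of the one-jet (resp.\ two-jet) of $h$ at $(0,y_0)$, $\phi$ becomes strongly pseudo-convex with respect to $P$ on $U$. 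Granted these properties, H\"ormander's unique continuation theorem — or equivalently the Carleman estimate with weight $e^{\tau\phi}$ — gives $v \equiv 0$ on $U \cap \{\phi \le \phi_\ast\}$ for some small $\phi_\ast > 0$. The target region \eqref{vanv1} is then extracted by solving $\phi \le \phi_\ast$ explicitly: the factor $1/(3N)$ in each coordinate splits the budget $\phi_\ast$ roughly equally among the three summands of $\phi$, and the hypothesis $s_0 + 1/x_0 < N/4$ guarantees that the resulting box fits inside $U$.

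The main technical obstacle is the strong pseudo-convexity condition. The principal symbol degenerates as $x \to 0$ — the $x^2 \xi^2$ term disappears and the characteristic variety picks up an extra null direction — so the Poisson bracket $\{p,\{p,\phi\}\}$ has to be shown to remain positive on $\{p = 0,\ \{p,\phi\} = 0\}$ in this degenerate limit. The term $\beta|y - y_0|^2$ is inserted precisely to produce a positive contribution in the $\eta$-direction, while $\alpha x$ together with the choice of $\alpha$ in terms of the first two $x$-derivatives of $h$ absorbs the subprincipal contributions of $A$ and $B$ in $P$. Tracking these constants carefully through the Poisson-bracket computation is what forces $\delta$ to depend on $r$ and on derivatives up to order two of $h$, as stated in the lemma.
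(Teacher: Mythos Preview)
Your outline follows the same overall strategy as the paper --- a Carleman estimate with a strongly pseudoconvex weight at the corner $p_0=(0,-1/x_0,y_0)$ --- but the specific weight you choose, and the way you invoke the estimate, miss the structural point the paper's argument is built around. Your $\phi=(s+1/x_0)+\alpha x+\beta|y-y_0|^2$ is \emph{nonnegative} on the whole wedge $\{x\ge 0,\ s\ge -1/x_0\}$, which is exactly where $v$ may fail to vanish. When you localize with a cutoff $\chi$ and apply the Carleman inequality to $\chi v$, the forcing term $[P,\chi]v$ lives on $\supp d\chi\cap\supp v$, and there $\phi>0$; so the absorption step --- bounding $\|e^{\tau\psi}[P,\chi]v\|$ by $Ce^{-c\tau}$ and sending $\tau\to\infty$ --- fails. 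Citing H\"ormander's theorem as a black box hides exactly this difficulty: the proof of that theorem first \emph{bends} the level surface so that $\{\phi=0\}\setminus\{p_0\}$ sits strictly inside the known vanishing set, and it is the bent function, not $\phi$, that serves as Carleman weight. The paper carries this out explicitly with a \emph{pair} of functions: $v=0$ on $\{\widetilde\varphi>0\}$ with $\widetilde\varphi=-x-\delta(s-a)-x(s-a)$, while the Carleman weight is $\varphi=\widetilde\varphi-\bigl(x+\delta(s-a)+\delta^{1/2}|y|^2\bigr)$. The subtracted piece is nonnegative on $\supp v$ and bounded below by $\delta(s_0-a)/N$ on the cutoff annulus, which is precisely what forces $\varphi\le -\delta(s_0-a)/N$ on $\supp([P,\chi]v)$ and delivers the explicit box \eqref{vanv1}.

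A second, related gap: the $s$-width of \eqref{vanv1} is $(s_0+1/x_0)/(3N)$, a \emph{fixed} fraction of $s_0-a$ that does not shrink with $\delta$; this is essential for the iteration in Lemma~\ref{lemma2}. It comes from the cross term $-x(s-a)$ in the paper's weight. That term contributes, after one application of $H_p$, both a factor $(s-a)$ in front of $\partial_x\varphi$ and a factor $x$ in front of $\partial_s\varphi$; applying $H_p$ again produces $-8\xi(\sigma+x^2\xi)$, which on $\{p=0\}$ equals $+4h$ --- the dominant positive term in $H_p^2\varphi$, with no small parameter in front. Your weight has no such cross term; the positivity you extract from $\beta|y-y_0|^2$ is real at $p_0$ (one checks $H_p^2\phi=-2\partial_xh+\beta H_h^2|y-y_0|^2\gtrsim(\beta c_1-C)h$ there), but it gives a continuation region whose size is governed by the unspecified constants in H\"ormander's theorem, not the $\delta$-independent $s$-gain the lemma asserts. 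In short, the pseudoconvexity verification and the quantitative bending are the content of the proof, and both have to be done with a weight engineered, as in the paper, so that its second Hamiltonian derivative produces $h$ with an $O(1)$ coefficient.
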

\begin{proof}
We should point out that the fact that the bound on $|s+\frac{1}{x_0}|$ does not depend on $\del$ or $r,$ is due to the fact that the coefficients of the operator $P$ do not depend on $s.$

 Let $(\xi,\sigma,\eta)$ denote dual local coordinates to $(x,s,y).$ The principal symbol of $P$ is
\begin{gather}
p=2\xi\sigma + x^2 \xi^2+h, \label{symbol}
\end{gather}
and the Hamilton vector field of $p$ is equal to
\begin{gather*}
H_p= 2(\sigma+x^2\xi)\p_x + 2\xi \p_s -(2x\xi^2+\p_x h) \p_\xi+ \sum_{j=1}^n (\p_{\eta_j} h \p_{y_j}- \p_{y_j} h \p_{\eta_j}).
\end{gather*}
 Suppose that $y_0=0\in \Omega$ and let $y$ be local coordinates valid in $\{|y|<r\}\subset \Omega.$  Let
\begin{gather*}
\vphi= -2x-2\del(s-a) -x(s-a) - \del^\ha |y|^2 , \text{ where } a=-\frac{1}{x_0}, \text{ and } \\
\widetilde{\vphi}= -x-\del(s-a)-x(s-a).
\end{gather*}
Then 
\begin{gather}
\begin{gathered}
v=0  \text{ on the set } \{ \widetilde{\vphi}>0\} \cap\left( \{x\leq 0, s\leq s_0, \; |y|<r \}  \cup \{ -\frac{1}{x} <s<-\frac{1}{x_0}, \;\ 0<x< x_0, \; |y|<r \} \right), \\
\end{gathered}\label{vanishtil}
\end{gather}
see Fig. \ref{fig01}.
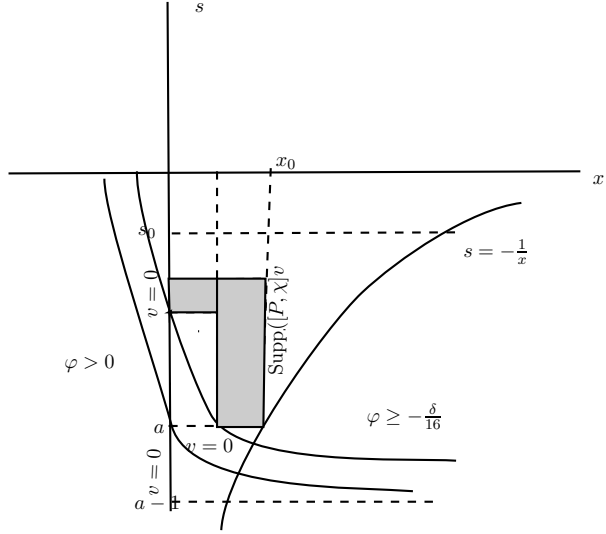
\begin{figure}
\scalebox{.7} 
{
\begin{pspicture}(0,-5.06)(11.52,5.08)
\definecolor{color512g}{rgb}{0.8,0.8,0.8}
\psline[linewidth=0.04cm](3.08,-4.68)(3.02,5.0)
\psline[linewidth=0.04cm](0.0,1.74)(10.86,1.78)
\psbezier[linewidth=0.04](4.04,-5.04)(4.106109,-3.8307755)(6.02,-1.18)(6.7089453,-0.52889794)(7.397891,0.12220408)(8.774181,1.0608163)(9.74,1.18)
\usefont{T1}{ptm}{m}{n}
\rput(11.21,1.625){$x$}
\usefont{T1}{ptm}{m}{n}
\rput(3.63,4.885){$s$}
\usefont{T1}{ptm}{m}{n}
\rput(2.63,0.625){$s_0$}
\usefont{T1}{ptm}{m}{n}
\rput(2.86,-3.115){$a$}
\usefont{T1}{ptm}{m}{n}
\rput(9.26,0.245){$s=-\frac{1}{x}$}
\psline[linewidth=0.04cm,linestyle=dashed,dash=0.16cm 0.16cm](5.24,-1.24)(5.24,-1.22)
\usefont{T1}{ptm}{m}{n}
\rput(5.27,1.945){$x_0$}
\psline[linewidth=0.04cm,linestyle=dashed,dash=0.16cm 0.16cm](3.08,-2.26)(3.06,-2.26)
\psline[linewidth=0.04cm,linestyle=dashed,dash=0.16cm 0.16cm](3.1,0.6)(8.48,0.62)
\usefont{T1}{ptm}{m}{n}
\rput(1.54,-1.875){$\vphi>0$}
\psline[linewidth=0.04cm,linestyle=dashed,dash=0.16cm 0.16cm](3.04,-3.06)(4.24,-3.06)
\psline[linewidth=0.04cm,linestyle=dashed,dash=0.16cm 0.16cm](3.6,-1.26)(3.62,-1.26)
\psline[linewidth=0.04cm,linestyle=dashed,dash=0.16cm 0.16cm](3.96,-3.08)(3.96,1.78)
\usefont{T1}{ptm}{m}{n}
\rput{86.89689}(-1.332562,-6.45324){\rput(2.74,-3.935){$v=0$}}
\psline[linewidth=0.04cm,linestyle=dashed,dash=0.16cm 0.16cm](4.24,-3.06)(4.8,-3.08)
\psline[linewidth=0.04cm,linestyle=dashed,dash=0.16cm 0.16cm](4.8,-3.1)(4.98,1.84)
\usefont{T1}{ptm}{m}{n}
\rput{89.40736}(4.026995,-6.168865){\rput(5.13,-1.035){$\supp([P,\chi] v$}}
\usefont{T1}{ptm}{m}{n}
\rput(3.82,-3.435){$v=0$}
\usefont{T1}{ptm}{m}{n}
\rput{89.34301}(2.119077,-3.243516){\rput(2.7,-0.555){$v=0$}}
\psline[linewidth=0.04cm,linestyle=dashed,dash=0.16cm 0.16cm](3.1,-4.5)(8.06,-4.5)
\psbezier[linewidth=0.04](1.82,1.64)(1.82,0.92)(2.7532682,-1.7980325)(3.1,-3.06)(3.4467318,-4.3219676)(6.72,-4.22)(7.68,-4.3)
\usefont{T1}{ptm}{m}{n}
\rput(2.83,-4.535){$a-1$}
\psbezier[linewidth=0.04](2.44,1.78)(2.4,1.04)(3.2932682,-1.8180325)(3.84,-2.84)(4.3867316,-3.8619676)(7.56,-3.66)(8.5,-3.72)
\psdots[dotsize=0.12](3.1,-0.86)
\usefont{T1}{ptm}{m}{n}
\rput(7.53,-2.875){$\vphi\geq -\frac{\del}{16}$}
\psline[linewidth=0.04,linestyle=dashed,dash=0.16cm 0.16cm,fillstyle=gradient,gradlines=2000,gradbegin=color512g,gradend=color512g,gradmidpoint=1.0](3.04,-0.28)(4.88,-0.26)(4.88,-0.9)(3.08,-0.9)(3.06,-0.92)
\psline[linewidth=0.04,fillstyle=gradient,gradlines=2000,gradbegin=color512g,gradend=color512g,gradmidpoint=1.0](3.96,-0.28)(3.96,-3.08)(4.84,-3.08)(4.88,-0.26)(3.04,-0.26)(3.06,-0.96)(3.08,-0.88)(3.04,-0.9)(3.94,-0.9)(3.96,-0.92)
\end{pspicture} 
}
\caption{$v(x,s,y)=0$ for $\tvphi>0$ in a neighborhood of $x=0,$ $s=a$ and $y=0.$}
\label{fig01}
\end{figure}

We also have
\begin{gather}
\begin{gathered}
p(x,s,y,d\vphi)= 2(2\del+x)(2+s-a)+x^2(2+s-a)^2+h(x,y,d_y \vphi)> 2\del \\
\text{ provided } |s-a|< 1, \;\ |x|< \del.
\end{gathered}\label{nonchar}
\end{gather}
and 
\begin{gather}
\begin{gathered}
H_p \vphi =-2(\sigma+x^2\xi)(2+s-a)-2\xi(2\del+x) -\del^\ha H_h|y|^2, \\
H_p^2 \vphi=-8\xi(\sigma+x^2\xi)(1+x(2+s-a))+4x(2\del+x+x^2(2+s-a))\xi^2 \\ -2\del^\ha(\sigma+x^2\xi)\p_x H_h |y|^2-
2(2\del+x+x^2(2+s-a))\p_x h-\del^\ha H_h^2 |y|^2.
\end{gathered}\label{hpfi}
\end{gather}
If $p=H_p\vphi=0,$ then
\begin{gather*}
h=\left(\frac{2(2\del+x)}{2+s-a}+x^2\right)\xi^2+\frac{\del^\ha}{2+s-a} \xi H_h |y|^2,
\end{gather*}
and hence
\begin{gather*}
h+\frac{1}{2+s-a} (H_h |y|^2)^2 \geq \left(\frac{3\del+2x}{2+s-a}+ x^2\right) \xi^2.
\end{gather*}
If $|s-a|<1,$ $|x|<\del$ and $C>0$  is such that
\begin{gather*}
(H_h |y|^2)^2\leq C h,
\end{gather*}
then
\begin{gather}
h\geq C \del \xi^2. \label{boundh}
\end{gather}
Here, and from now on, $C>0$ denotes a constant which depends on the metric $h(x),$ $x\in [-\eps,\eps].$ 

If $p=0,$ then $2(\sigma+x^2\xi)\xi=-h +x^2\xi^2,$ and we deduce from \eqref{hpfi} that
\begin{gather*}
H_p^2\vphi=4(1+x(2+s-a))h +\frac{2\del}{2+s-a} (H_h |y|^2)(\p_x H_h |y|^2)-\del^\ha H_h^2 |y|^2\\
-2(2\del+x+x^2(2+s-a)) \p_x h + \frac{2(2\del+x)}{2+s-a} \xi \p_x H_h |y|^2 + 8\del x \xi^2,
\end{gather*}
and if $\del<\frac{1}{10}$ 
\begin{gather*}
H_p^2\geq 3h- 100 \del^\ha( (\p_x H_h|y|^2)^2+ (H_h|y|^2)^2+ |\p_x h|+ |H_h^2|y|^2|)-20 \del^2 \xi^2.
\end{gather*}
We can pick $\del_0$ such that 
\begin{gather}
3h- 100 \del^\ha( (\p_x H_h|y|^2)^2+ (H_h|y|^2)^2+ |\p_x h|+ |H_h^2|y|^2|)>h, \text{ if }  \del<\del_0 \label{boundh0}
\end{gather}
we can use \eqref{boundh} to conclude that if $|x|<\del$ and $\del<\del_0,$
\begin{gather}
H_p^2 \vphi\geq h-20\del^2\xi^2=\ha h  + C\del \xi^2. \label{boundh2}
\end{gather}

Hence we conclude that if $|x|<\del<\del_0$ and  $|s-a|<1,$ then
\begin{gather*}
p(d\vphi)> \del  \text{ and  if }  p=H_p \vphi=0  \Rightarrow H_p^2 \vphi>0  \text{ provided } (\xi,\sigma,\eta)\not=0.
 \end{gather*}
So the level surfaces of $\vphi$ are strongly pseudoconvex with respect to $P$ in the region 
\begin{gather}
U=\{ |x|< \del, \; |s-a|<\bar{s} , \;  |y|< r   \}, \;\ \bar{s}=\min\{1, s_0-a\} \label{regionu}
\end{gather}
and therefore it follows from Theorem 28.2.3 and Proposition 28.3.3 of \cite{hormander4}  that if $Y\subset\subset U$ and $\la>0$ and $K>0$ are large enough, then for
$\psi=e^{\la \vphi},$
\begin{gather}
\begin{gathered}
\sum_{|\alpha|<2} \tau^{2(2-|\alpha|)-1} \int |D^\alpha w|^2 e^{2\tau \psi} dxdsdy \leq \\
K(1+C\tau^{-\ha}) \int |Pw|^2 e^{2\tau \psi} dxdsdy, \;\ w\in C_0^\infty(Y), \;\ \tau>1.
\end{gathered}\label{carle}
\end{gather}
Let 
\begin{gather*}
U_\gamma= \{ |x|<\gamma \del, \; |s-a|< \gamma\bar{s} , \;  |y|< \gamma r   \},
\end{gather*}
and $\chi(x,s,y) \in C_0^\infty$ be such that  $\chi=1$ on $U_\oq$ and $\chi=0$ outside $U_\ha.$

  Therefore
  \begin{gather}
  \supp[P,\chi] \subset  \overline{U_\ha\setminus U_\oq}.\label{supcom}
  \end{gather}
  On the other hand, $v=0$ if $\tvphi>0,$ and $|x|<\del,$ $|s-a|<\bar{s},$ and $|y|<r,$ and
  \begin{gather*}
  \vphi= \tvphi-(x+\del(s-a)+\del^\ha |y|^2),
  \end{gather*}
  so we conclude that
  \begin{gather*}
  \vphi\leq -(x+\del(s-a)+\del^\ha |y|^2) \text{ on the support of } v.
  \end{gather*}
  So we deduce from \eqref{supcom} that,  provided that $\del^\ha <\frac{r^2}{4},$ and  $N$ is such that $\frac{s_0-a}{N}<\frac{1}{4},$
  \begin{gather*}
  \vphi \leq - \min_{\overline{V_\ha\setminus V_{\frac{1}{4}}}} (x+\del(s-a)+\del^\ha |y|^2)=-\frac{\del(s_0-a)}{N}.
   \end{gather*}
  
So we conclude that
  \begin{gather*}
  \supp [P,\chi] v \subset \{ \vphi \leq -\frac{\del(s_0-a)}{N}\},
  \end{gather*}
 and hence we deduce from \eqref{carle} applied to $w=\chi v,$  and the fact that $P\chi v= \chi P v+[P,\chi ] v=[P,\chi] v$ that
\begin{gather*}
\sum_{|\alpha|<2} \tau^{2(2-|\alpha|)-1} \int |D^\alpha \chi v|^2 e^{2\tau \psi} dxdsdy \leq C e^{2\tau e^{-\la \frac{\del(s_0-a)}{N}}}
\end{gather*}
and we conclude that $\chi v=0$ if $\vphi\geq -\frac{\del(s_0-a)}{N}.$  Notice that $\vphi\geq -\frac{\del(s_0-a)}{3N}$  corresponds to the set
\begin{gather*}
x+\del(s-a)+\del^\ha |y|^2<\frac{\del(s_0-a)}{N}
\end{gather*}
and since $v=0$ in $\{x<0, \;\ s<s_0\} \cup \{-\frac{1}{x}<s< a\},$ we deduce that $v=0$ on the set
\begin{gather*}
\{|x|< \frac{ \del(s_0-a)}{3N}, \;\ |s-a|\leq \frac{s_0-a}{3N}, \; |y|^2\leq \frac{\del^\ha(s_0-a)}{3N}\}.
 \end{gather*}
\end{proof}
The next step on the proof is the following lemma:
\begin{lemma}\label{lemma2} Let $\Omega \subset \p X$ be an open subset and  let $u(t,z)$ be a solution to \eqref{waveeq} with initial data $(0,f).$   Suppose that \eqref{metric} holds in $(0,\eps),$ and let 
$v(x,s,y)=x^{-\novt} u(s+\frac{1}{x},x,y).$  Suppose that $v\in C^\infty([0,\eps)\times \mr \times \p X)$ and  that  $v=0$ on the set
\begin{gather*}
\{-\frac{1}{x}< s<a,  x\leq x_0=-\frac{1}{a}\;\ y\in \Omega\} \cup \{x<0, \;\ s<s_0, \;\ y\in \Omega \}.
\end{gather*}
Then $v=0$ on the set $\{-\frac{1}{x}<s<s_0, \;\ x<\min\{\eps,-\frac{1}{s_0}\}, \;\ y\in \Omega\}.$ See Fig.\ref{fig2}
\end{lemma}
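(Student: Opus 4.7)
The plan is to prove Lemma \ref{lemma2} through an open--closed continuity argument in the $s$-variable that repeatedly invokes Lemma \ref{lemma1}, and then to extend the resulting vanishing of $v$ throughout the $x$-range by a propagation-of-vanishing argument. Fix $y_0 \in \Omega$ with $\overline{B(y_0, r_0)} \subset \Omega$, and define
\[
T(y_0) := \sup\bigl\{b \in [a, s_0] : \exists\,\varepsilon > 0,\ \rho > 0,\ v(x,s,y) = 0 \text{ for all } -\tfrac{1}{x} < s < b,\ 0 < x < \varepsilon,\ |y - y_0| < \rho\bigr\}.
\]
The initial hypothesis gives $T(y_0) \geq a$; the main continuity claim is $T(y_0) = s_0$ for every $y_0 \in \Omega$.

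Suppose, for contradiction, that $T(y_0) = b^* < s_0$. Choose $b' \in [a, b^*)$ so close to $b^*$ that $b^* - b' < \tfrac{1}{3}(s_0 - b^*)$ and $s_0 - b' < \tfrac{1}{4}$, the latter allowing $N = 1$ in Lemma \ref{lemma1}. By definition of $T(y_0)$ there exist $\varepsilon_{b'}, \rho_{b'} > 0$ with $v = 0$ on the box $\{-\tfrac{1}{x} < s < b',\ 0 < x < \varepsilon_{b'},\ |y - y_0| < \rho_{b'}\}$. Apply Lemma \ref{lemma1} at the base point $(0, b', y_0)$ with $x_0$ replaced by $-1/b'$ and $a$ by $b'$: the proof of Lemma \ref{lemma1} relies only on the \emph{local} vanishing of $v$ near $(0, b', y_0)$, since its Carleman estimate is applied to $\chi v$ with $\chi$ supported in a small box $U$, so the local box above suffices. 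The conclusion extends $v = 0$ up to $s = b' + \tfrac{1}{3}(s_0 - b') > b^*$, contradicting the maximality of $T(y_0)$. Hence $T(y_0) = s_0$, which produces open slivers of vanishing near $\{x = 0\}$ reaching every $s < s_0$ and every $y \in \Omega$.

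The main obstacle is to convert these slivers into the full target wedge $W = \{-\tfrac{1}{x} < s < s_0,\ 0 < x < \min(\varepsilon, -1/s_0),\ y \in \Omega\}$, since each application of Lemma \ref{lemma1} produces only an $x$-width of order $\delta(s_0 - b')/(3N)$ that degenerates as $b' \to s_0$. Combining the slivers with the original wedge $\{-\tfrac{1}{x} < s < a,\ 0 < x < x_0\}$ and the vanishing on $\{x < 0,\ s < s_0,\ y \in \Omega\}$ gives $v = 0$ on an open set with infinite-order vanishing at $\{x = 0,\ s < s_0,\ y \in \Omega\}$ (using smoothness of the extension of $v$ across $x = 0$). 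A unique-continuation argument for the hyperbolic operator $P$ then extends the vanishing across $W$: either a Carleman estimate with a phase pseudoconvex at an interior base point $x_\ast > 0$ adapted to the principal symbol $p = 2\xi\sigma + x^2\xi^2 + h(\eta)$, or a Holmgren-type propagation across non-characteristic slices using the zero Cauchy data on the light cone $s = -1/x$ (where the support of $f$ lies in $\{x \geq x_0\}$, so $v$ and $\partial_s v$ both vanish there for $x<x_0$). Formulating such an interior pseudoconvex weight throughout the entire $x$-range is the most delicate part of the argument.
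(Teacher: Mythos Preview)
Your open--closed iteration of Lemma~\ref{lemma1} does produce, for every $b<s_0$ and every $y_0\in\Omega$, a sliver $\{0<x<\varepsilon_b,\ |y-y_0|<\rho_b\}$ on which $v=0$ for $s<b$. (One small slip: the choice $N=1$ requires $s_0-b'<\tfrac14$, which you cannot arrange when $b^*\le s_0-\tfrac14$; simply keep the fixed $N$ determined by $s_0-a$ throughout, and the step $\tfrac{1}{3N}(s_0-b')\ge\tfrac{1}{3N}(s_0-b^*)>0$ still beats $b^*-b'$ for $b'$ close enough to $b^*$.)

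The genuine gap is your second step. You propose to pass from the shrinking slivers to the whole wedge by either an interior Carleman weight for $P$ or a ``Holmgren--type'' argument across $\{x=x_*\}$; neither is available here. The coefficients of $P$ are only $C^\infty$, so Holmgren does not apply, and for hyperbolic operators there is in general no pseudoconvex level surface transverse to the time--like direction that would let a Carleman estimate push the support across the full $x$--range. You yourself flag this as ``the most delicate part'' and do not carry it out; as written the argument stops at the slivers.

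The paper closes this gap by a different mechanism, inserted \emph{inside} the iteration rather than at the end: after one application of Lemma~\ref{lemma1} gives $v=0$ for $x<C\delta$, $s<a_1$, $|y-y_0|<C\delta$, one returns to the original coordinates. For any small $\alpha\in(0,C\delta)$ this says $u(t,\alpha,y)=0$ for $0<t<a_1+\tfrac{1}{\alpha}$; the oddness $u(-t,z)=-u(t,z)$ (coming from initial data $(0,f)$) doubles this to $|t|<a_1+\tfrac{1}{\alpha}$. Now Tataru's unique continuation theorem for $D_t^2-\Delta_g$ (coefficients independent of $t$) yields $u(t,z)=0$ whenever $|t|+d_g(z,(\alpha,y))<a_1+\tfrac{1}{\alpha}$. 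Taking $z=(x,y)$ with $x>\alpha$ and using $d_g((x,y),(\alpha,y))=\tfrac{1}{\alpha}-\tfrac{1}{x}$ gives $u=0$ for $|t|<a_1+\tfrac{1}{x}$, i.e.\ $v=0$ for $-\tfrac{1}{x}<s<a_1$, for \emph{all} $x<\min\{\varepsilon,-\tfrac{1}{a_1}\}$ and $y\in\Omega$. This restores the full hypothesis of Lemma~\ref{lemma1} with $a$ replaced by $a_1$ and the \emph{same} $\delta$, so the sequence $a_j=a_{j-1}+\tfrac{1}{3N}(s_0-a_{j-1})$ converges to $s_0$ without any degeneration of the $x$--width. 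The transformation back to $(t,z)$ is essential: in $(x,s,y)$ the principal symbol $2\xi\sigma+x^2\xi^2+h$ is not of the form $\sigma^2-q$, so Tataru's theorem cannot be invoked directly for $P$.
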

\begin{proof} The main ingredients of the proof of this result are Lemma \ref{lemma1} and the following result of Tataru \cite{tataru,tataru1}:  If $u(t,z)$ is a $C^\infty$ function that satisfies
\begin{gather*}
\begin{gathered}
(D_t^2-\Delta_g +L(z,D_z))u=0 \text{ in } (\widetilde{T}, \widetilde{T}) \times \Omega, \\
u(t,z)=0 \text{ in a neighborhood of } \{z_0\} \times (-T,T), \;\ T<\widetilde{T},
\end{gathered}
\end{gather*}
where $\Omega\subset \mrn,$ $g$ is a $C^\infty$ Riemannian metric and $L$ is a first order $C^\infty$ operator (that does not depend on $t$), then
\begin{gather}
u(t,z)=0 \text{ if } |t|+ d_g(z,z_0)<T, \label{tatres}
\end{gather}
 where $d_g$ is the distance measured with respect to the metric $g.$    
 
 Let
\begin{gather}
a_0=a, \;\  a_1= a_0+ \frac{1}{3N}(s_0-a_0) \text{ and } a_j=a_{j-1} + \frac{1}{3N}(s_0-a_{j-1}).\label{sequence}
\end{gather}
 
 We know from Lemma \ref{lemma1} that for each $y_0\in \Omega,$ there exists $\del>0$ such that
 $v(x,s,y)=0$ if $x<C\del,$ $|y-y_0|<C\del$ and $s<a_1=a+\frac{s_0-a}{3N}.$    
  In particular for any $\alpha \in (0,C\del ),$ $v(\alpha,s,y)=0$ in a neighborhood of  the segment
 \begin{gather*}
 x=\alpha, \; -\frac{1}{\alpha} < s< a_1, \; y \in \{|y-y_0|<C\del\}.
 \end{gather*}
 Since $t=s+\frac{1}{\alpha},$ this implies that $u(t,x,y)= x^{\frac{n}{2}} v(x,t-\frac{1}{x},y)=0$ in a neighborhood of the segment
 \begin{gather*}
 x=\alpha, \; 0< t <a_1+\frac{1}{\alpha}, \;\ y \text{ such that } |y-y_0|< C\del.
 \end{gather*}
 But since the initial data is of the form $(0,f),$ $u(t,z)=-u(-t,z),$ and hence $u(t,z)=0$ in a neighborhood of 
 \begin{gather*}
x=\alpha, \;\  -a_1 -\frac{1}{\alpha}< t< \frac{1}{\alpha} +a_1, \;\ y \text{ such that } |y-y_0|< C\del .
 \end{gather*}
 From \eqref{tatres} we obtain
 \begin{gather*}
 u(t,z)=0 \text{ if }   |t|+ d_g(z, (\alpha,y) )<a_1 + \frac{1}{\alpha}. \label{tatres1}
 \end{gather*}
 If one picks  $z=(x,y),$ with $\eps> x>\alpha,$ then $d_g(z,(\alpha,y))= \frac{1}{\alpha}-\frac{1}{x},$ and hence in particular,
  \begin{gather*}
 u(t,x,y)=0 \text{ if }  0< t<\frac{1}{x}+a_1, \;\ x<\min\{-\frac{1}{a_1},\eps\},  \;\  |y-y_0|< C\del . \label{tatres2}
 \end{gather*}
 Since $y_0$ is arbitrary, this implies that 
\begin{gather*}
v(x,s,y)=0 \text{ on the set } \{-\frac{1}{x}<s<a_1,  \;\ x<\min\{-\frac{1}{a_1}, \eps\} , \;\ y\in \Omega\}.
\end{gather*}

Applying this argument above $j$ times we obtain
 \begin{gather*}
 v(x,s,y)=0 \text{ on the set }   \{x\leq 0, s\leq s_0, y \in \Omega\} \cup \{ x\leq \min\{\eps,-\frac{1}{a_j}\} , \;\ -\frac{1}{x} < s \leq a_1, \; y \in \Omega\}.
 \end{gather*} 

The sequence \eqref{sequence} is increasing and $a_j< s_0.$  Let 
$L=\lim_{j\rightarrow\infty} a_j.$ Then from the definition of $a_j$ it follows that $\frac{1}{3N}(s_0-L)=0,$ and so $L=s-0.$  Since $v\in C^\infty$ it follows that

 \begin{gather*}
 v(x,s,y)=0 \text{ on the set }   \{x\leq 0, s\leq s_0, y \in \Omega\} \cup \{ x\leq \min\{\eps,-\frac{1}{s_0}\}  \;\ -\frac{1}{x} < s\leq s_0, \; y \in \Omega\}.
 \end{gather*} 

This proves the Lemma.
\end{proof}
\begin{figure}
\scalebox{.6} 
{
\begin{pspicture}(0,-4.58)(10.98,4.6)
\definecolor{color751g}{rgb}{0.8,0.8,0.8}
\psline[linewidth=0.04cm](0.96,4.44)(0.96,-4.46)
\psline[linewidth=0.04cm](0.98,0.8)(10.54,0.84)
\usefont{T1}{ptm}{m}{n}
\rput(1.48,-4.055){$v=0$}
\usefont{T1}{ptm}{m}{n}
\rput(10.67,0.645){$x$}
\usefont{T1}{ptm}{m}{n}
\rput(1.39,4.405){$s$}
\usefont{T1}{ptm}{m}{n}
\rput(0.6,-3.635){$a$}
\psline[linewidth=0.04cm,linestyle=dashed,dash=0.16cm 0.16cm](2.78,-2.4)(2.8,0.86)
\usefont{T1}{ptm}{m}{n}
\rput(2.87,1.045){$\eps$}
\usefont{T1}{ptm}{m}{n}
\rput(0.43,0.0050){$s_0$}
\psbezier[linewidth=0.04](1.36,-4.56)(1.74,-3.58)(2.976013,-1.9763635)(4.34,-1.1)(5.703987,-0.22363645)(8.84,0.32)(9.66,0.2)
\usefont{T1}{ptm}{m}{n}
\rput{90.0196}(0.83104706,-5.290763){\rput(3.06,-2.215){$v=0$ by unique continuation}}
\usefont{T1}{ptm}{m}{n}
\rput{87.40695}(-1.2388291,-2.4837968){\rput(0.68,-1.895){$v=0$}}
\psline[linewidth=0.04,fillstyle=gradient,gradlines=2000,gradbegin=color751g,gradend=color751g,gradmidpoint=1.0](0.98,0.02)(2.8,-0.02)(2.8,-2.42)(1.86,-3.62)(0.98,-3.6)
\end{pspicture} 
}
\caption{The  second step of the unique continuation across the wedge $\{-\frac{1}{x}<s< -\frac{1}{x_0},y\in \Omega\} \cup \{x< 0,s< s_0, y\in \Omega\}$ }
\label{fig2}
\end{figure}
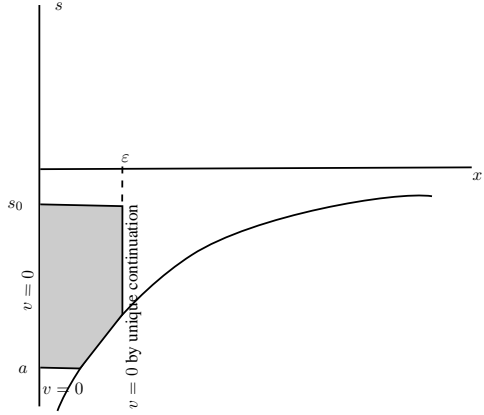

Now we can finish the proof of  Theorem \ref{main}.    Suppose $\supp(f) \subset \{x>x_0\}$ and that $\mcr_+(0,f)(s,y)=0,$ if $s<s_0$ and $y\in \Omega.$   Then  $v$ extends as a solution to \eqref{waveeq1} satisfying
\eqref{vanv}.  Then Lemma \ref{lemma1} and Lemma \ref{lemma2} imply that 
\begin{gather*}
v=0 \text{ in the set } \{x<\min\{\eps,-\frac{1}{s_0}\},  \;\ -\frac{1}{x}<s <s_0,  \;\ y \in \Omega\}.
\end{gather*}

As in the proof of Lemma \ref{lemma2}, we deduce that  for any $(x,y)$ with $x\leq \min\{\eps, -\frac{1}{s_0}\} $ and $y\in \Omega,$ $u(t,w)=0$ in a neighborhood of
$-(s_0+\frac{1}{x})< t<(s_0+\frac{1}{x}),$ and applying \eqref{tatres}, we conclude that
\begin{gather*}
u(t,z)=\p_t u(t,z)=0 \text{ provided } x<\eps, \;\ y\in \Omega \text{ and }  |t|+d_g(z,(x,y))< s_0 + \frac{1}{x}.
\end{gather*}
In particular, if $t=0,$ $f=\p_t u(0,z)=0$ if $d_g(z,(x,y))< s_0 + \frac{1}{x}.$ This concludes the proof of Theorem \ref{main}.

\end{document}